\title{Vertex-transitive Haar graphs that are not Cayley graphs}
\author{Marston Conder,$^{a}$ \;Istv\'an Est\'elyi,$^{b,c}$   \; Toma\v{z} Pisanski$^{d,c,b}$ \\  [+0.75ex]
$^a$ {\small Mathematics Department, University of Auckland, Private Bag 92019, Auckland 1142, New Zealand} \\ [-0.5ex]
$^b$ {\small FMF, University of Ljubljana, Jadranska 19, 1000 Ljubljana, Slovenia} \\ [-0.5ex]
$^c$ {\small IAM, University of Primorska, Muzejski trg 2, 6000 Koper, Slovenia}\\[-0.5ex] 
$^d$ {\small FAMNIT, University of Primorska, Glagolja\v{s}ka 8, 6000 Koper, Slovenia} \\ [-0.5ex]
}
\date{}
\newtheorem{thm}{Theorem}
\newtheorem{cor}[thm]{Corollary}
\newtheorem{prop}[thm]{Proposition}
\newtheorem{prob}{Problem}
\def\Z{\mathbb{Z}}
\DeclareMathOperator{\cay}{Cay}
\DeclareMathOperator{\aut}{Aut}
\DeclareMathOperator{\bcay}{BCay}
\newcommand{\comment}[1]{}
\providecommand*{\xmapstofill@}{%
  \arrowfill@{\mapstochar\relbar}\relbar\rightarrow
}
\providecommand*{\xmapsto}[2][]{%
  \ext@arrow 0395\xmapstofill@{#1}{#2}%
}
\begin{document}

\maketitle

\let\thefootnote\relax\footnote{
The first author was supported by New Zealand's Marsden Fund (grant no.\ UOA1323). 
The second author was
supported in part by the Slovenian Research Agency (research program P1-0294, research project J1-7051 and a Young Researchers Grant).
The third author was
supported in part by the Slovenian Research Agency (research program P1-0294 and research projects N1-0032, L7-5554 and J1-6720), and in part by the H2020 Teaming InnoRenew CoE.
\\  [+0.5ex]
{\em  E-mail addresses:} m.conder@auckland.ac.nz (Marston Conder), istvan.estelyi@student.fmf.uni-lj.si (Istv\'an Est\'elyi),  tomaz.pisanski@upr.si (Toma\v{z} Pisanski).
}

${}$\\[-30pt] 
\centerline{\em Dedicated to Egon Schulte and K\'{a}roly Bezdek on the occasion of their 60th birthdays}
\\[-6pt] 

\begin{abstract}
In a recent paper 
(arXiv:1505.01475 ) Est\'elyi and Pisanski raised a question whether there exist vertex-transitive Haar graphs that are not Cayley graphs. In this note we construct an infinite family of trivalent Haar graphs that are vertex-transitive but non-Cayley. The smallest example has 40 vertices and is the well-known Kronecker cover over the dodecahedron graph $G(10,2)$, 
occurring as the graph `{\bf 40}' in the Foster census  of connected symmetric trivalent graphs. 
\medskip

\noindent{\em Keywords:} Haar graph, Cayley graph, vertex-transitive graph. \medskip

\noindent{\em MSC 2010:} 05E18 (primary), 20B25 (secondary). 
\end{abstract} 

\section{Introduction}
\label{sec:Intro}

Let $G$ be a group, and $S$ be a subset of $G$ with $1_G \notin S$. 
Then the {\em Cayley graph\/} $\cay(G,S)$ is 
is the graph with vertex-set $G$ and with edges of the form $\{g,sg\}$ for all $g \in G$ and $s \in S$. 
Equivalently, since all edges can be written in the form $\{1,s\}g$, 
this is a covering graph over a single-vertex graph having loops and semi-edges, 
with voltages taken from $S${\hskip 1pt}: the order of a voltage over a semi-edge is $2$ 
(corresponding to an involution in $S$), while the order of voltage over a loop is greater than $2$ 
(corresponding to a non-involution in $S$).  Note that we may assume $S = S^{-1}$. 

A natural generalisation of Cayley graphs are the so called \emph{Haar graphs}, introduced in \cite{HlaMP02} by Hladnik  et al, as follows. 
A {\em dipole\/} is a graph with two vertices, say black and white, and parallel edges (each from the \emph{white} vertex to the \emph{black} vertex), but no loops.
Given a group $G$ and an arbitrary subset $S$ of $G,$ the \emph{Haar graph} $H(G,S)$ is the regular $G$-cover of a dipole with $|S|$ parallel edges, labeled by elements of $S$. In other words, the vertex-set of $H(G,S)$ is  
$G \times \{0,1\},$ and the edges are of the form $\{(g,0),(sg,1)\}$ for all $g \in G$ and $s \in S$. 
If it is not ambiguous, we use the notation $(x,0) \sim (y,1)$ to indicate an edge $\{(x,0),(y,1)\}$ of $H(G,S)$.  
The name `Haar graph' comes from the fact that when $G$ is an abelian group, the Schur norm of 
the corresponding adjacency matrix can be easily evaluated via the so-called Haar integral on G (see~\cite{Hla99}).

Note that the group $G$ acts on $H(G,S)$ as a group of automorphisms, by right multiplication, 
and moreover, $G$ acts regularly on each of the two parts of $H(G,S)$, namely $\{(g,0) : g \in G\}$ and $\{(g,1) : g \in G\}$. 
Conversely, if $\Gamma$ is any bipartite graph and its automorphism group $\aut\Gamma$ has a subgroup $G$ 
that acts regularly on each part of $\Gamma$, then $\Gamma$ is a Haar graph --- indeed $\Gamma$ 
is isomorphic to $H(G,S)$ where $S$ is determined by the edges incident with a given vertex of $\Gamma$. 

Haar graphs form a special subclass of the more general class of \emph{bi-Cayley graphs}, which are graphs that admit a semiregular group of automorphisms with two orbits of equal size. Every bi-Cayley graph can be realised as follows. Let $L$ and $R$ be subsets of a group $G$ such that $L=L^{-1}$, $R=R^{-1}$ and $1\notin L\cup R,$ and let $S$ be any subset of $G$. Now take a dipole with edges labelled by elements of $S$, and add $|L|$ loops to the white (or `left') vertex and label these by elements of $L$, and similarly add $|R|$ loops to the black (or `right') vertex and label these by elements of $R$. This is a voltage graph, and the bi-Cayley graph $\bcay(G,L,R,S)$ is its regular $G$-cover. The vertex-set of $\bcay(G,L,R,S)$ is $G \times \{0,1\},$ and the edges are of three forms: $\{(g,0),(lg,0)\}$ for $\,l\in L$, $\,\{(g,1),(rg,1)\}$ for $\,r\in R,$ and $\{(g,0),(sg,1)\}$ for $s \in S,$  for all $g \in G$.  Note that the Haar graph $H(G,S)$ is exactly the same as the bi-Cayley graph $\bcay(G,\emptyset,\emptyset,S)$.

Recently bi-Cayley graphs (and Haar graphs in particular) have been investigated by several authors --- see \cite{DuX00,EstP15, ExoJ11, HlaMP02, JinL10,KoiK14,KKP14,KKM10, KovMMM09,Lu03,LuWX06,Pis07,W08,ZhouF14}, for example. 

It is known that every Haar graph over an abelian group is a Cayley graph (see~\cite{Lu03}). 
More precisely, if $A$ is an abelian group, 
then a Haar graph over $A$ is a Cayley graph over the corresponding \emph{generalised dihedral group} $D(A),$ 
which is the group generated by $A$ and the automorphism of $A$ that inverts every element of $A$ (see \cite{Sco64}).  
The authors of \cite{HlaMP02} considered only cyclic Haar graphs --- that is, Haar graphs $H(G,S)$ where $G$ is a cyclic group.
In \cite{EstP15}, the second and third authors of this paper extended the study of Haar graphs to those over non-abelian groups, and found some that are not vertex-transitive, and some others that are Cayley graphs. The existence of Haar graphs that are vertex-transitive but non-Cayley remained open, and led to the following question. 

\begin{prob}\label{prob2}
Is there a non-abelian group $G$ and a subset $S$ of $G$ such that the Haar graph $H(G,S)$ is vertex-transitive but
non-Cayley?
\end{prob}

In this note we give a positive answer to the above question, by exhibiting an infinite family 
of trivalent examples, coming from a family of double covers of generalised Petersen graphs. 
These graphs, which we denote by $D(n,r)$ for any integers $n$ and $r$ with $n \ge 3$ and $0 \! < \! r \! < \! n$, 
are described in Section~\ref{sec:DefineFamily}.  They have been considered previously 
by other authors (as we explain); in particular, by a theorem of Feng and Zhou \cite{ZhouF14}, 
it is known exactly which of the graphs $D(n,r)$ are vertex-transitive, and which are Cayley.  
Then in Section~\ref{sec:Main} we determine necessary and sufficient conditions for $D(n,r)$ to be a Haar graph, 
and this provides the answer in Section~\ref{sec:Final}.  

\section{The graphs $D(n,r)$ and their properties}
\label{sec:DefineFamily}

Let $G(n,r)$ be the generalised Petersen graph on $2n$ vertices with span $r$. 
By $D(n,r)$ we denote a double cover of $G(n,r),$ in which the edges get non-trivial voltage 
if and only if they belong to the `inner rim' (see below). 
This gives a class of graphs that was introduced by Zhou and Feng \cite{ZhouF12} under the 
name of \emph{double generalised Petersen graphs}, and studied recently also by 
Kutnar and Petecki \cite{KutP1x}.  
In both \cite{ZhouF12} and \cite{KutP1x}, the notation $DP(n,r)$ was used for the graph $D(n,r)$. 
But it is easy for us to define the vertices and edges of the graph $D(n,r)$ explicitly. 

\smallskip
There are four types of vertices, called $u_i,v_i,w_i$ and $z_i$ (for $i \in \Z_n$), 
and three types of edges, given by the sets \\[-22pt] 
\begin{align*}
\Omega &=\{\{u_i,u_{i+1}\}, \{z_i,z_{i+1}\} : i\in \mathbb{Z}_n\} &\textrm{(the `outer' edges),}\\
\Sigma &=\{\{u_i,v_i\}, \{ w_i,z_i\} : i\in \mathbb{Z}_n\} &\textrm{(the `spokes'), and }\\
I &=\{\{v_i,w_{i+r}\}, \{ v_i,w_{i-r}\} : i\in \mathbb{Z}_n\} &\textrm{(the `inner' edges).}
\end{align*} 

This specification makes it easy to see that each $D(n,r)$ a special  tetracirculant~\cite{FreK13}, 
which is a cyclic cover $\Sigma_0(n,a,k,b)$ over the voltage graph given in Figure 1. 
To see this, simply take $a = b = 1$ and $k = 2r$, and then $D(n,r) \cong \Sigma_0(n,1,2r,1)$. 
\\[-18pt] 

\begin{figure}[!htb]
\label{fig:voltage_graph}
\begin{center}
\includegraphics[]{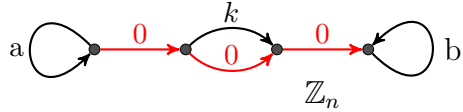}
${}$ \vskip -36pt ${}$
\end{center}
\centering
\caption{Voltage graph defining the tetracirculant $\Sigma_0(n,a,k,b)$}
\end{figure}

We now give some of other properties of the graphs $D(n,r)$.

\begin{prop}
Every $D(n,r)$ is connected.
\end{prop}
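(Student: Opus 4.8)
The plan is to argue directly from the explicit description of the vertices and edges, by exhibiting a single connected subgraph that meets every vertex. First I would observe that the outer edges $\{u_i,u_{i+1}\}$ (for $i\in\Z_n$) form a spanning cycle on the set $\{u_i : i\in\Z_n\}$, so all the $u_i$ lie in one connected component; likewise the outer edges $\{z_i,z_{i+1}\}$ put all the $z_i$ in one connected component. Next, the spokes $\{u_i,v_i\}$ attach each $v_i$ to the component containing the $u_i$, and the spokes $\{w_i,z_i\}$ attach each $w_i$ to the component containing the $z_i$. At this stage the graph has at most two connected components: one containing all the $u_i$ and $v_i$, and one containing all the $w_i$ and $z_i$.

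Finally I would use a single inner edge, say $\{v_0,w_r\}\in I$, to join these two pieces: since $v_0$ lies in the first component and $w_r$ in the second, the two components coincide, and hence $D(n,r)$ is connected. This works for every $n\ge 3$ and every $r$ with $0<r<n$, and we never need the full structure of the inner rim $I$ --- one inner edge already suffices.

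An alternative, more conceptual route is to invoke the description of $D(n,r)$ as the cyclic cover $\Sigma_0(n,1,2r,1)$ of the connected voltage graph in Figure~1: a regular cover of a connected graph is connected precisely when the voltages on the edges generate the voltage group, and here the edge carrying voltage $a=1$ already generates $\Z_n$. Either way there is no real obstacle; the only thing to get right is the bookkeeping of which vertex classes are linked by which of the three edge types $\Omega$, $\Sigma$ and $I$.
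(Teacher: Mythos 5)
Your argument is correct and is essentially the same as the paper's own proof: outer cycles connect all the $u_i$ and all the $z_i$, spokes attach the $v_i$ and $w_i$ to these, and an inner edge joins the two resulting pieces. Your version is just slightly more explicit (naming the single edge $\{v_0,w_r\}$), and the voltage-graph remark is a valid alternative but not needed.
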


\begin{proof}
Clearly all of the $u_i$ lie in the same component as each other, as do all the $z_j$. 
Next, all the $v_i$ lie in the same component as the $u_i$, and similarly, all the $w_j$ lie 
in the same component as the $z_j$. 
Finally, there are edges between the vertices $v_i$ and some of the $w_j$, 
and this makes the whole graph connected.
\end{proof}

\begin{prop}
\label{dpbip}
The graph $D(n,r)$ is bipartite if and only if $n$ is even.
\end{prop}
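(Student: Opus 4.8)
The plan is to use the standard fact that a graph is bipartite if and only if it contains no cycle of odd length, and to treat the two implications separately. For the direction ``$n$ odd $\Rightarrow$ not bipartite'' I would simply exhibit an explicit odd cycle: the outer edges $\{u_i,u_{i+1}\}$ for $i\in\Z_n$ form the closed walk $u_0,u_1,\dots,u_{n-1},u_0$, which is a cycle of length $n$. If $n$ is odd this is an odd cycle sitting inside $D(n,r)$, so the graph cannot be bipartite.

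For the converse, ``$n$ even $\Rightarrow$ bipartite'', I would produce an explicit proper $2$-colouring $c$ of the vertex set, using the parity of the subscripts (which is well defined on $\Z_n$ precisely because $n$ is even). Concretely, set $c(u_i)=i$, $c(v_i)=i+1$, $c(w_i)=i+r$ and $c(z_i)=i+r+1$, all read modulo $2$, and let the two parts of the bipartition be the preimages $c^{-1}(0)$ and $c^{-1}(1)$. Then one checks the three edge types in turn: a spoke $\{u_i,v_i\}$ joins colours $i$ and $i+1$; an outer edge $\{u_i,u_{i+1}\}$ joins $i$ and $i+1$, and likewise $\{z_i,z_{i+1}\}$ joins $i+r+1$ and $i+r+2\equiv i+r$, so both outer edges are properly coloured; a spoke $\{w_i,z_i\}$ joins $i+r$ and $i+r+1$; and an inner edge $\{v_i,w_{i\pm r}\}$ joins colour $i+1$ to colour $(i\pm r)+r = i\pm r + r$, which is $i+2r\equiv i$ or $i\equiv i$ modulo $2$, hence opposite to $i+1$ in either case. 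Since every edge joins the two colour classes, $D(n,r)$ is bipartite.

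There is no serious obstacle here; the proof is essentially bookkeeping. The one point that genuinely needs the observation that $r$ and $-r$ have the same parity is the inner edges: each $v_i$ is joined both to $w_{i+r}$ and to $w_{i-r}$, and the colouring above is consistent for both only because $i+r$ and $i-r$ differ by $2r$. The other point worth stating explicitly is that the parity function on subscripts is well defined on $\Z_n$ exactly when $n$ is even, which is what makes the whole construction break down — as it must — when $n$ is odd.
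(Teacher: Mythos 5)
Your proof is correct and follows essentially the same route as the paper: the odd outer cycle on the $u_i$ rules out bipartiteness for odd $n$, and your parity colouring $c(u_i)=i$, $c(v_i)=i+1$, $c(w_i)=i+r$, $c(z_i)=i+r+1$ (mod $2$) gives exactly the bipartition the paper describes (the part containing $u_i$ and $w_{i\pm r}$ for even $i$ and $v_j$, $z_{j\pm r}$ for odd $j$). You merely verify the edge types more explicitly than the paper does, which is fine.
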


\begin{proof}
If $n$ is odd, then the vertices $u_i$ lie in a cycle of odd length, and so the graph is not bipartite. 
On the other hand, if $n$ is even, then the graph is bipartite, with one part containing the vertices $u_i$ 
and $w_{i \pm r}$ for even $i$ and the vertices $v_j$ and $z_{j \pm r}$ for odd $j$. 
\end{proof}

We now consider automorphisms of the graphs $D(n,r)$. 
Some automorphisms are apparent from the definition, such as these, which were noted in \cite{KutP1x}: 
\\[-6pt]

\begin{tabular}{rllllr}
$\alpha :$ & \quad $u_i\mapsto u_{i+1},$ & \ $v_i\mapsto v_{i+1},$ & \ $w_i\mapsto w_{i+1},$ & \ $z_i\mapsto z_{i+1}$ & (rotation), \\[+4pt]
$\beta :$ & \quad $u_i\mapsto z_{i},$ & \ $v_i\mapsto w_{i},$ & \ $w_i\mapsto v_{i},$ & \ $z_i\mapsto u_{i}$ & \quad (flip symmetry), \\[+4pt] 
$\gamma :$ & \quad $u_i\mapsto u_{-i},$ & \ $v_i\mapsto v_{-i},$ & \ $w_i\mapsto w_{-i},$ & \ $z_i\mapsto z_{-i}$ & (reflection). \\[+12pt] 
\end{tabular}

\noindent 
Immediately we obtain the following:

\begin{prop}
\label{D(n,r)-2orbits} 
The automorphism group of the graph $D(n,r)$ has at most two orbits on vertices, 
namely the set of all $u_i$ and all $z_j$, and the set of all $v_i$ and all $w_j$. 
\end{prop}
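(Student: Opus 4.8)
The plan is to exhibit explicit automorphisms mapping a $u$-vertex to a $z$-vertex and a $v$-vertex to a $w$-vertex, and then argue that no further collapsing of orbits is forced by the definition. Concretely, I would first observe that the rotation $\alpha$ fixes each of the four sets $\{u_i\}$, $\{v_i\}$, $\{w_i\}$, $\{z_i\}$ setwise and acts transitively within each, so modulo $\langle\alpha\rangle$ there are at most four vertex orbits. Then I would use the flip symmetry $\beta$, which interchanges $u_i\leftrightarrow z_i$ and $v_i\leftrightarrow w_i$; composing with powers of $\alpha$ shows that $\{u_i\}\cup\{z_i\}$ lies in a single $\aut D(n,r)$-orbit and likewise $\{v_i\}\cup\{w_i\}$ lies in a single orbit. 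Hence $\aut D(n,r)$ has at most two orbits on vertices, with the two candidate orbits being exactly the two sets named in the statement.

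The remaining point is to explain why these two sets need not merge into one — i.e.\ why the bound is stated as "at most two" rather than "exactly two": the vertices $u_i,z_j$ have all their neighbours among outer edges and spokes (so each lies on a cycle from $\Omega$), while the vertices $v_i,w_j$ have neighbours only along spokes and inner edges. I would note that the partition $\{u_i,z_j\}\mid\{v_i,w_j\}$ is the bipartition of $D(n,r)$ into the "rim" vertices of degree-pattern coming from $\Omega\cup\Sigma$ and the "hub" vertices coming from $I\cup\Sigma$; since the generators $\alpha,\beta,\gamma$ all respect this partition, every automorphism in $\langle\alpha,\beta,\gamma\rangle$ does too, which is all that is needed for the stated upper bound. (Whether the full automorphism group can actually fuse the two sets is not claimed here and is irrelevant to the proposition.)

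I would write the argument as: the subgroup $\langle\alpha,\beta\rangle \le \aut D(n,r)$ already acts with exactly two orbits on the vertex set, namely $\{u_i : i\in\Z_n\}\cup\{z_j : j\in\Z_n\}$ and $\{v_i : i\in\Z_n\}\cup\{w_j : j\in\Z_n\}$; since orbits of a subgroup refine orbits of the whole group, $\aut D(n,r)$ has at most these two orbits. The main obstacle is essentially bookkeeping rather than mathematics: one must be careful that $\beta$ as defined really is an automorphism (check it preserves $\Omega$, $\Sigma$, and $I$ — the last one because $\{v_i,w_{i\pm r}\}$ maps to $\{w_i,v_{i\pm r}\} = \{v_{i\pm r}, w_{(i\pm r)\mp r}\}\in I$), and that combining $\beta$ with suitable $\alpha^k$ genuinely sends any chosen $u_i$ to any chosen $z_j$ and any chosen $v_i$ to any chosen $w_j$. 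Once those checks are in place the proposition follows immediately, so I expect no serious difficulty.
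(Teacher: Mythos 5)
Your argument is correct and is essentially the paper's own: the paper simply lists the automorphisms $\alpha$, $\beta$, $\gamma$ and observes that the proposition follows immediately, which is exactly your point that $\langle\alpha,\beta\rangle$ already has the two stated sets as its orbits, so the orbits of the full automorphism group can only be coarser. Your middle paragraph about the two sets not merging is superfluous (as you note, the statement only claims ``at most two''), but it does no harm.
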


Note also that $\alpha$ and $\beta$ commute with each other. 
In fact, Zhou and Feng \cite{ZhouF12} proved that $D(n,r)$ is isomorphic 
to the bi-Cayley graph $\bcay(G, R,L,\{1\})$ over the abelian 
group $G=\langle\alpha,\beta\rangle\cong \Z_n\times \Z_2,$ 
and  $R=\{\alpha, \alpha^{-1}\}$ and $L=\{\alpha^r\beta,\alpha^{-r}\beta\}$.

\smallskip
Next, we consider isomorphisms among the graphs $G(n,r)$ and $D(n,r)$. 

\begin{prop}\label{rm-r}
For every $n$ and $r$, the graph $D(n,r)$ is isomorphic to $D(n,n\!-\!r)$, 
and $D(2n,r)$ is isomorphic to  $D(2n,n\!-\!r)$.
\end{prop}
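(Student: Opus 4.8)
The plan is to exhibit explicit graph isomorphisms by giving bijections on the vertex sets and checking they preserve the three edge types $\Omega$, $\Sigma$, $I$. For the first claim, $D(n,r)\cong D(n,n-r)$, the natural guess is the identity on the $u$- and $z$-vertices together with a swap of the two "middle" layers, say $\varphi\colon u_i\mapsto u_i$, $z_i\mapsto z_i$, $v_i\mapsto w_i$, $w_i\mapsto v_i$ — or, if that does not respect orientation of the inner edges, the reindexing map $u_i\mapsto u_i$, $v_i\mapsto v_i$, $w_i\mapsto w_i$, $z_i\mapsto z_i$ is clearly wrong, so instead one twists one rim: $v_i\mapsto v_i$, $w_i\mapsto w_{-i}$ combined with $z_i \mapsto z_{-i}$. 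I would write down the candidate, then verify: outer edges $\{u_i,u_{i+1}\}$ and $\{z_i,z_{i+1}\}$ go to outer edges (here the reflection on the $z$-rim sends $\{z_i,z_{i+1}\}$ to $\{z_{-i},z_{-i-1}\}$, still in $\Omega$); spokes $\{u_i,v_i\}$, $\{w_i,z_i\}$ go to spokes; and the inner edges $\{v_i,w_{i\pm r}\}$ map to $\{v_i,w_{\mp(i\pm r)}\}=\{v_i,w_{-i\mp r}\}$, which after the substitution $j=-i$ become $\{v_{-j},w_{j\mp r}\}$ — and these are exactly the inner edges of $D(n,n-r)$ once one observes $w_{j\mp r}=w_{j\pm(n-r)}$ in $\Z_n$. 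So the key identity driving the first isomorphism is simply $-r\equiv n-r\pmod n$.

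For the second claim, $D(2n,r)\cong D(2n,n-r)$ (note $n-r$ here is \emph{not} $2n-r$, so this is a genuinely different statement, only available when the first coordinate is even), I would look for a bijection that shifts alternate indices by $n$. A natural candidate over $\Z_{2n}$ is something like $u_i\mapsto u_i$, $z_i\mapsto z_i$ for all $i$, but $v_i\mapsto v_i$ and $w_i\mapsto w_{i+n}$ — no, that breaks spokes — so instead perturb the spoke structure consistently: e.g. map $v_i \mapsto v_i$, $w_i\mapsto w_i$ but compose with the "partial rotation by $n$ on one sublattice". The cleaner route is to use the bi-Cayley description recorded just above the proposition: $D(2n,r)\cong\bcay(\Z_{2n}\times\Z_2,\{\alpha^{\pm1}\},\{\alpha^{\pm r}\beta\},\{1\})$, and then find a group automorphism or a suitable isomorphism of covers carrying the connection set $\{\alpha^{r}\beta,\alpha^{-r}\beta\}$ to $\{\alpha^{n-r}\beta,\alpha^{-(n-r)}\beta\}$ while fixing $\{\alpha^{\pm1}\}$ and $\{1\}$. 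Since $\alpha^{n-r}\beta=\alpha^{n}\cdot\alpha^{-r}\beta$ and $\alpha^n$ is the unique involution in the $\Z_{2n}$ factor, one wants an automorphism of $\Z_{2n}\times\Z_2$ (or a graph isomorphism not coming from a group automorphism) that multiplies the "$\beta$-coset" elements by $\alpha^n$; the map $(\alpha^i\beta^{\epsilon})\mapsto \alpha^{i+n\epsilon}\beta^{\epsilon}$ is such a candidate and needs to be checked to be a well-defined automorphism (it is, since it is $x\mapsto x\cdot\alpha^{n\pi(x)}$ for the projection $\pi$ to $\Z_2$, a standard construction).

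The main obstacle, as I see it, is not the first isomorphism — that is a one-line index chase — but getting the second one exactly right: one must be careful that $n-r$ is reduced mod $2n$, not mod $n$, and that the relevant map genuinely does \emph{not} extend to arbitrary $D(n,r)$ (it uses $\alpha^n$ being central of order $2$, which needs $2\mid(\text{first parameter})$). I would therefore present the second part via the explicit vertex bijection $u_i\mapsto u_i$, $v_i\mapsto v_i$, $w_i\mapsto w_{i+n}$, $z_i\mapsto z_{i+n}$ (for $i\in\Z_{2n}$) and check the three edge families directly: $\Omega$ is preserved since adding the constant $n$ is a translation; $\Sigma$ is preserved since $\{w_i,z_i\}\mapsto\{w_{i+n},z_{i+n}\}$ and $\{u_i,v_i\}\mapsto\{u_i,v_i\}$; and $\{v_i,w_{i\pm r}\}\mapsto\{v_i,w_{i\pm r+n}\}=\{v_i,w_{i\pm(r-n)+2n\cdot(\ldots)}\}$, which, after recognising $r+n\equiv r-n\equiv n-r$-type bookkeeping modulo $2n$, lands in the inner-edge set of $D(2n,n-r)$. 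The verification amounts to the single congruence $i+r+n \equiv i-(n-r)\pmod{2n}$ together with its mirror, so I would state that explicitly and leave the remaining case-checks to the reader as routine.
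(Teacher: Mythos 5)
Your second isomorphism is handled correctly: the vertex bijection $u_i\mapsto u_i$, $v_i\mapsto v_i$, $w_i\mapsto w_{i+n}$, $z_i\mapsto z_{i+n}$ is exactly the paper's half-turn of the two inner layers, and your congruence $i+r+n\equiv i-(n-r)\pmod{2n}$ (with its mirror) is the whole verification; the paper phrases the same map as giving $D(2n,r)\cong D(2n,n+r)$ and then invokes the first part, which is equivalent bookkeeping.

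The problem is the first isomorphism, where you have it backwards: the identity map, which you dismiss as ``clearly wrong'', is in fact the correct one, and the map you use instead is not an isomorphism. Since $r\equiv-(n-r)\pmod n$, the inner edge set $I=\{\{v_i,w_{i+r}\},\{v_i,w_{i-r}\}:i\in\Z_n\}$ of $D(n,r)$ is literally the same set as that of $D(n,n-r)$, while $\Omega$ and $\Sigma$ do not involve $r$ at all; so $D(n,r)=D(n,n-r)$ as labelled graphs --- this is precisely the paper's one-line remark that $G(n,r)$ is identical to $G(n,n-r)$. By contrast, your twist $v_i\mapsto v_i$, $w_i\mapsto w_{-i}$ sends $\{v_i,w_{i+r}\}$ to $\{v_i,w_{-i-r}\}$, whereas the inner edges of $D(n,n-r)$ at $v_i$ are $\{v_i,w_{i-r}\}$ and $\{v_i,w_{i+r}\}$; you would need $-i-r\equiv i-r$ or $-i-r\equiv i+r \pmod n$, i.e.\ $2i\equiv 0$ or $2(i+r)\equiv 0$, which fails for most $i$ (e.g.\ $n=5$, $r=2$, $i=1$: $\{v_1,w_3\}\mapsto\{v_1,w_2\}$, not an edge of $D(5,3)$). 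The slip in your check is the substitution $j=-i$: the image edge is $\{v_{-j},w_{j\mp r}\}$, and rewriting $w_{j\mp r}=w_{j\pm(n-r)}$ does not make it an inner edge of $D(n,n-r)$, because the $w$-index must differ from the $v$-index $-j$ (not from $+j$) by $\pm(n-r)$. The repair is immediate: drop the twist and observe that the edge sets coincide, which is also consistent with your own closing remark that the first part should reduce to $-r\equiv n-r\pmod n$.
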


\begin{proof}
First, the graphs $D(n,r) \cong D(n,n\!-\!r)$  because $G(n,r)$ is identical to $G(n,n\!-\!r)$. 
For the second part, consider a 180 degree rotation of the two `inner' layers, 
namely $w_i \mapsto w_{i+n}$ and $z_i \mapsto z_{i+n}$ for all $i$.  
This shows that $D(2n,r)$ is isomorphic to $D(2n,n+r)$, and then applying the first part 
gives $D(2n,r) \cong D(2n,2n-(n\!+\!r))= D(2n,n\!-\!r)$. 
\end{proof}

Here we note that it can happen that the graphs $D(n,r)$ and $D(n,s)$ are different 
when $G(n,r)$ is isomorphic to $G(n,s)$. 
For instance, $G(7,2)$ is isomorphic to $G(7,3)$ but $D(7,2)$ is not isomorphic to $D(7,3)$,  
since $D(7,3)$ is planar but $D(7,2)$ is not. 

Also we have the following:

\begin{prop}
For every $r$, the graph $D(2r\!+\!1,r)$ is planar, 
and isomorphic to the generalised Petersen graph $G(4r\!+\!2,2)$.
\end{prop}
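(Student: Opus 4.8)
The plan is to establish the isomorphism $D(2r\!+\!1,r)\cong G(4r\!+\!2,2)$ by an explicit vertex relabelling, and then to read off planarity directly from the generalised Petersen picture. Throughout put $n=2r+1$ and $N=2n=4r+2$, keeping indices of the $u,v,w,z$ modulo $n$ and those of the outer and inner vertices $a_k,b_k$ of $G(N,2)$ modulo $N$. The key elementary observation is that in $\Z_n$ one has $-r\equiv r+1$, so the inner edge set $I$ attaches each $v_i$ to the two \emph{consecutive} vertices $w_{i+r}$ and $w_{i+r+1}$. Hence the $v_i$ and $w_j$, together with $I$, form a single $N$-cycle: setting $x_{2i}=v_i$ and $x_{2i+1}=w_{i+r+1}$ for $i\in\Z_n$, one checks at once (using the two types of inner edge alternately) that $x_0\sim x_1\sim\cdots\sim x_{N-1}\sim x_0$. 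This $N$-cycle will become the outer cycle of $G(N,2)$, while the two $n$-cycles formed by $\Omega$ (one on the $u_i$, one on the $z_j$) will become the two inner cycles of $G(N,2)$, which exist precisely because $\gcd(2,N)=2$.

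Concretely I would define $\phi\colon D(n,r)\to G(N,2)$ by $\phi(v_i)=a_{2i}$, $\phi(w_j)=a_{2j-n}$, $\phi(u_i)=b_{2i}$ and $\phi(z_j)=b_{2j-n}$. Since $n$ is odd, $2j-n$ runs over the odd residues modulo $N$ as $j$ runs over $\Z_n$, so $\phi$ is a bijection of the $4n=2N$ vertices. It then remains to verify that $\phi$ maps each of the three edge-types of $D(n,r)$ into $E(G(N,2))$: the outer edges $\{u_i,u_{i+1}\}$ and $\{z_j,z_{j+1}\}$ go to $\{b_{2i},b_{2i+2}\}$ and $\{b_{2j-n},b_{2j-n+2}\}$, which are inner-cycle edges of $G(N,2)$; the spokes $\{u_i,v_i\}$ and $\{w_j,z_j\}$ go to the spokes $\{a_{2i},b_{2i}\}$ and $\{a_{2j-n},b_{2j-n}\}$; and, using $2r+1=n\equiv N/2$, the inner edges $\{v_i,w_{i+r}\}$ and $\{v_i,w_{i-r}\}$ go to $\{a_{2i},a_{2i-1}\}$ and $\{a_{2i},a_{2i+1}\}$, which are outer-cycle edges of $G(N,2)$. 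Thus $\phi$ carries the $6n$ edges of $D(n,r)$ injectively into the $6n$-element edge set of $G(N,2)$, hence bijectively, so $\phi$ is an isomorphism.

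For planarity it now suffices to draw $G(N,2)$ with $N$ even in the plane. Draw the outer $N$-cycle $a_0a_1\cdots a_{N-1}$ as a regular $N$-gon; inside it, draw the inner $n$-cycle on the even-indexed $b_k$ as a smaller concentric $n$-gon with each spoke $a_{2i}b_{2i}$ drawn as a radial segment; and outside it, draw the inner $n$-cycle on the odd-indexed $b_k$ as a larger concentric $n$-gon with each spoke $a_{2i+1}b_{2i+1}$ drawn as a radial segment. No two edges cross in this drawing, so $G(N,2)$ --- and therefore $D(2r\!+\!1,r)$ --- is planar. (Euler's formula gives $2n+2$ faces, as it should for a cubic planar graph on $4n$ vertices.)

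The one point requiring care is the bookkeeping between the two index systems, and I expect the "inner-edge $\to$ outer-edge" case to be the fussiest, since it is there that the hypothesis $n=2r+1$ is actually used. Once the relabelling $\phi$ is pinned down the planar embedding is immediate, so the real content of the proposition is the isomorphism.
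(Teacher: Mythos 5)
Your proposal is correct and follows essentially the same route as the paper: both rest on the observation that, since $\gcd(r,2r\!+\!1)=1$ (equivalently $-r\equiv r+1$ mod $2r\!+\!1$), the inner edges form a single $2(2r\!+\!1)$-cycle on the $v_i$ and $w_j$, which together with the two $(2r\!+\!1)$-cycles on the $u_i$ and $z_j$ gives the three-concentric-cycles picture identifying the graph with $G(4r\!+\!2,2)$. The only difference is order and explicitness --- the paper draws the planar embedding first and reads the isomorphism off the drawing, while you pin down the isomorphism by an explicit relabelling $\phi$ and then deduce planarity --- which is a harmless reorganisation.
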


\begin{proof} 
To see that $D(2r\!+\!1,r)$ is planar, first note that since $r$ is coprime to $2r\!+\!1$, 
the edges between the vertices $v_i$ and $w_j$ give a cycle of length $2(2r\!+\!1)$, 
namely $(v_0,w_{-r},v_{1},w_{1-r},v_{2},w_{2-r},\dots,v_{-2},w_{r-1},v_{-1},w_{r})$.  
Now draw three concentric circles, with the middle one for this $2(2r\!+\!1)$-cycle, 
the inside one for the $(2r+1)$-cycle $(u_0,u_1,\dots,u_{2r})$, and the outside one for 
the $(2r+1)$-cycle $(z_0,z_1,\dots,z_{2r})$, in a consistent order, and then add the 
spoke edges $\{u_i,v_i\}$ and $\{w_i,z_i\}$ in the natural way.  
In the resulting planar drawing of $D(2r\!+\!1,r)$, there is 
an inner face of length $2r\!+\!1$ (with the $u_i$ as vertices), then two layers of pentagonal faces 
(bounded by cycles of the form $(u_i,v_i,w_{i-r},v_{i+1},u_{i+1})$ and  $(v_j,w_{j+r},z_{j+r},z_{j-r},w_{j-r})$), 
and an outer face of length $2r\!+\!1$ (with the $z_j$ as vertices). 
After doing this, it is also easy to see that $D(2r\!+\!1,r)$ is isomorphic to the generalised 
Petersen graph $G(4r\!+\!2,2)$, with the spoke edges joining vertices of the 
large $2(2r+1)$-cycle (on the vertices $v_i$ and $w_j$) to the two $(2r+1)$-cycles 
(on the vertices $u_i$ and vertices $z_j$ respectively). 
\end{proof}

In particular, the graph $D(5,2)$ is isomorphic to the dodecahedral graph $G(10,2)$, 
and hence $D(5,2)$ is vertex-transitive.  But as we will see, it is not a Haar graph. 

\smallskip
Finally in this section, we consider the questions of which of the graphs $D(n,r)$ are vertex-transitive, 
and which are Cayley (or equivalently, for which $\aut(D(n,r))$ has a subgroup that acts 
regularly on vertices).  
Recall that $\aut(D(n,r))$ has at most two orbits on vertices, and just one when $(n,r) = (5,2)$. 
The complete picture was determined by Feng and Zhou in~\cite[Theorem 1.3]{ZhouF14}, as follows:

\begin{thm}
\label{zhoufvt}
The graph $D(n,r)$ is vertex-transitive if and only if 
$n = 5$ and $r = \pm 2$, or $n$ is even and $r^2\equiv \pm 1 $ mod $\frac{n}{2}$.
In the first case, $D(n,r)$ is isomorphic to the dodecahedral graph $G(10, 2)$, 
which is non-Cayley, and in the second case, if $\,r^2\equiv 1$ mod~$\frac{n}{2}$ then $D(n,r)$ is a Cayley graph, 
while if $\,r^2\equiv -1$ mod~$\frac{n}{2}$ then $D(n,r)$ is non-Cayley.
\end{thm}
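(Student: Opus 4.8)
The plan is to settle vertex-transitivity first and then the Cayley/non-Cayley dichotomy, working throughout with the bi-Cayley model. By Proposition~\ref{D(n,r)-2orbits}, $\aut(D(n,r))$ has vertex orbits contained in $\mathcal{O}_1 = \{u_i,z_i : i\in\Z_n\}$ and $\mathcal{O}_2 = \{v_i,w_i : i\in\Z_n\}$; since $\langle\alpha,\beta\rangle$ is already transitive on each of $\mathcal{O}_1$ and $\mathcal{O}_2$, the graph is vertex-transitive exactly when some automorphism maps one vertex of $\mathcal{O}_1$ into $\mathcal{O}_2$. Using the identification $D(n,r)\cong\bcay(\Z_n\times\Z_2, R, L, \{1\})$ with $R=\{\alpha,\alpha^{-1}\}$ and $L=\{\alpha^r\beta,\alpha^{-r}\beta\}$, the part $\mathcal{O}_1$ carries the Cayley graph $\cay(\Z_n\times\Z_2, R)$ (two $n$-cycles), the part $\mathcal{O}_2$ carries $\cay(\Z_n\times\Z_2, L)$, and the two parts are joined by the perfect matching $(g,0)\sim(g,1)$.

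\textit{The ``if'' direction.} Suppose $n=2m$ and $r^2\equiv\pm1\pmod m$. I would write down an automorphism $\sigma$ of $\Z_n\times\Z_2$ with $\sigma(\alpha)=\alpha^r\beta$ and $\sigma(\beta)$ equal to one of the three involutions $\beta,\,\alpha^{m},\,\alpha^{m}\beta$, chosen (according to the parity of $r$ and the value of $r^2$ modulo $n$) so that $\sigma(R)=L$ and $\sigma(L)=R$; the congruence $r^2\equiv\pm1\pmod m$ is precisely what makes a valid choice of $\sigma(\beta)$ possible, and this is also where evenness of $n$ is forced. Then $\delta\colon (g,i)\mapsto(\sigma(g),1-i)$ is an automorphism of $\bcay(\Z_n\times\Z_2,R,L,\{1\})$ (compatibility with the matching holds because $\sigma$ fixes $S=\{1\}$), and it interchanges $\mathcal{O}_1$ and $\mathcal{O}_2$; hence $D(n,r)$ is vertex-transitive. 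For $(n,r)=(5,2)$, vertex-transitivity is already contained in the preceding proposition, which gives $D(5,2)\cong G(10,2)$, the dodecahedral graph.

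\textit{The ``only if'' direction} is, I expect, the main obstacle. For every other $(n,r)$ one must show no automorphism mixes the two vertex classes. A clean approach is to use that $D(n,r)\cong\Sigma_0(n,1,2r,1)$ is a cyclic $\Z_n$-cover of the four-vertex tetracirculant voltage graph of Figure~1: an automorphism normalising the deck group $\Z_n$ projects to an automorphism of the base together with its voltage assignment, and a direct check shows such a projected map can swap the two vertex classes only if $r^2\equiv\pm1\pmod m$; one then rules out automorphisms not normalising $\Z_n$, which forces the cover to be small and leaves only $D(5,2)$. Alternatively one can compare the cycle lengths through a vertex of $\mathcal{O}_1$, which lies on the outer $n$-cycle, with those through a vertex of $\mathcal{O}_2$, which lies only on inner cycles of length $2n/\gcd(n,2r)$; Proposition~\ref{dpbip} and the planarity observations handle several families, but the arc-transitive cases such as $D(10,2)$ still need care.

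\textit{Cayley versus non-Cayley.} When $r^2\equiv1\pmod m$, the automorphism $\sigma$ above can be taken with $\sigma^2=\id$, so $\delta^2=\id$; since $\delta$ conjugates the regular right-translation group $G\cong\Z_n\times\Z_2$ to itself, the group $\langle G,\delta\rangle\cong(\Z_n\times\Z_2)\rtimes\Z_2$ has order $4n=|V(D(n,r))|$ and acts regularly, so $D(n,r)$ is a Cayley graph. When $r^2\equiv-1\pmod m$, every such $\sigma$ has order $4$ with $\sigma^2$ the inversion of $\Z_n\times\Z_2$, so $\delta^2=\gamma$ and $\langle G,\delta\rangle$ is strictly larger than $4n$; to conclude $D(n,r)$ is non-Cayley one determines $\aut(D(n,r))$ (following \cite{ZhouF14}) and checks that none of its subgroups of order $4n$ is transitive, the obstruction being a fixed-point-free central involution that every such subgroup must contain and that prevents transitivity. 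This is exactly what happens for $D(5,2)\cong G(10,2)$: here $\aut\cong A_5\times\Z_2$, its only subgroups of order $20$ are copies of $D_5\times\Z_2$, and the central (antipodal) involution preserves both orbits of the $D_5$ factor, so these subgroups are intransitive and the dodecahedral graph is non-Cayley; the argument for the double covers is the same in spirit. I expect pinning down $\aut(D(n,r))$ precisely enough to exclude regular subgroups in the $r^2\equiv-1$ family to be the other substantial part of the work.
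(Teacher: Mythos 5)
This statement is not proved in the paper at all: it is quoted verbatim as Theorem~1.3 of Feng and Zhou \cite{ZhouF14}, and the authors use it only as an input to their own Theorem~\ref{thm:Main}. So the comparison is really between your attempt and the Feng--Zhou classification it would have to replace. The constructive half of your proposal is sound: a group automorphism $\sigma$ of $\Z_n\times\Z_2$ with $\sigma(R)=L$ and $\sigma(L)=R$, extended by $(g,i)\mapsto(\sigma(g),1-i)$, does give an automorphism of $\bcay(\Z_n\times\Z_2,R,L,\{1\})$ swapping the two parts, and combined with Proposition~\ref{D(n,r)-2orbits} this yields vertex-transitivity when $r^2\equiv\pm1\pmod{n/2}$; moreover your observation that $\sigma$ can be chosen involutory when $r^2\equiv1\pmod{n/2}$ gives a regular subgroup of order $4n$ and hence the Cayley conclusion in that case. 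This is in the same spirit as (and slightly slicker than) the explicit coordinate automorphism $\delta$ that the paper itself constructs later in the proof of Theorem~\ref{thm:Main} for the case $r^2\equiv-1$.

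The genuine gap is in the two negative claims, which are exactly the hard content of the theorem and which your proposal only announces. For the ``only if'' direction you assert that an automorphism normalising the deck group $\Z_n$ can swap the classes only if $r^2\equiv\pm1\pmod{n/2}$ (``a direct check shows''), and that automorphisms not normalising $\Z_n$ can be ruled out except for $D(5,2)$; neither claim is substantiated, and the second is precisely where arc-transitive exceptions such as $D(10,2)$ and $D(10,3)$ must be confronted, so it cannot be dismissed in a sentence. For the non-Cayley claim when $r^2\equiv-1\pmod{n/2}$, showing that $\langle G,\delta\rangle$ has order larger than $4n$ proves nothing about other potential regular subgroups; you acknowledge this and propose to ``determine $\aut(D(n,r))$ (following \cite{ZhouF14})'', but that is circular, since \cite{ZhouF14} is the very source of the statement being proved, and determining $\aut(D(n,r))$ (or enough of it to exclude all subgroups acting regularly on the $4n$ vertices) is the bulk of Feng and Zhou's work. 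The heuristic about a fixed-point-free central involution obstructing transitivity is also not a valid mechanism in general (Cayley graphs can admit such involutions); even in the model case $G(10,2)$ the correct argument is the concrete one that the order-$20$ subgroups of $A_5\times\Z_2$ are the intransitive groups $D_5\times\Z_2$. As it stands, then, the proposal proves the ``if'' statements and the Cayley half, but leaves the classification's converse and the non-Cayley family unproved.
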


\section{The graphs $D(n,r)$ as Haar graphs}
\label{sec:Main}

Recall that a Haar graph is a regular cover of a dipole, and also a bi-Cayley graph. 
Also we have the following, proved in a different way in \cite[Proposition 5]{EstP15}: 

\begin{prop}\label{cayhaar}
A Cayley graph is a Haar graph if and only if it is bipartite.
\end{prop}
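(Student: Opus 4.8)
The plan is to prove both directions directly from the definitions. For the easy direction, suppose a Cayley graph $\cay(G,S)$ is bipartite. I would use the standard fact that in a connected bipartite Cayley graph the bipartition is invariant under the regular action of $G$ (each generator swaps the two parts, since all generators must have odd ``length'' with respect to the bipartition classes — more precisely, the parity map $G\to\Z_2$ sending each $s\in S$ to $1$ is a well-defined homomorphism because $S=S^{-1}$ and the graph is connected and bipartite). Its kernel $K$ is an index-$2$ subgroup acting semiregularly with exactly two orbits, namely the two parts of the bipartition, and it acts regularly on each. By the characterisation of Haar graphs quoted in the introduction (if $\aut\Gamma$ has a subgroup acting regularly on each part of a bipartite graph $\Gamma$, then $\Gamma\cong H(K,S')$ for a suitable $S'$), this shows $\cay(G,S)$ is a Haar graph. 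If the Cayley graph is disconnected, apply this argument componentwise, or simply note that $K = \{g\in G : \ell(g)\text{ even}\}$ where $\ell$ is word length can still be taken as the index-$2$ ``parity'' subgroup, provided the graph is bipartite at all.

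For the converse, suppose $\Gamma = H(G,S)$ is a Haar graph and also a Cayley graph. Every Haar graph is bipartite by construction (the vertex set $G\times\{0,1\}$ splits into $G\times\{0\}$ and $G\times\{1\}$, and every edge $\{(g,0),(sg,1)\}$ joins the two parts), so $\Gamma$ is bipartite. Hence a Cayley graph that is a Haar graph is necessarily bipartite, giving the ``only if'' direction essentially for free.

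The main (and only real) obstacle is making the ``parity homomorphism'' argument fully rigorous in the disconnected case, and in being careful that the subgroup $K$ of index $2$ really does act regularly — not merely semiregularly — on each part. Regularity on each part follows because $|K| = |G|/2$ equals the size of each part, and $K$ acts semiregularly (being a subgroup of the regular action of $G$), so on each of its two orbits it is transitive, hence regular. I would also remark that this recovers, in particular, the known fact that every Haar graph over an abelian group $A$ is a Cayley graph over the generalised dihedral group $D(A)$: there $A$ itself is the index-$2$ parity subgroup inside $D(A)$. No delicate estimates are needed; the proof is short and structural.
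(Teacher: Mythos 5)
Your proposal is correct and takes essentially the same route as the paper: the ``only if'' direction is immediate because Haar graphs are bipartite by construction, and for the converse the kernel of your parity homomorphism is exactly the paper's index-$2$ subgroup of the Cayley group preserving the two parts, which acts regularly on each part, so the characterisation of Haar graphs quoted in the Introduction applies. Your extra remarks only add detail (the paper simply asserts the index-$2$ and regularity claims and, like most treatments, tacitly works with the connected case; note that your word-length description of the parity subgroup is not available when $S$ fails to generate, though the componentwise reduction can be made to work).
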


\begin{proof} 
Let $\Gamma$ be a Cayley graph, say for a group $K$.  
Then $K$ acts on $\Gamma$ as a group of automorphisms, and acts regularly 
on the vertices of $\Gamma$. 
Now if $\Gamma$ is a Haar graph, then by definition $\Gamma$ is bipartite.
Conversely, suppose $\Gamma$ is bipartite.  Then the subgroup $G$ of $K$ 
preserving each of the two parts of $\Gamma$ has index $2$ in $K$, and acts regularly 
on each part, so $\Gamma$ is a Haar graph (by the argument given in the Introduction). 
\end{proof} 

We can now prove our main theorem: 

\begin{thm}
\label{thm:Main}
$D(n,r)$ is a Haar graph if and only if it is vertex-transitive and $n$ is even. 
\end{thm}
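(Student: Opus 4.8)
The plan is to prove both directions using the structural facts already assembled. For the forward direction, suppose $D(n,r)$ is a Haar graph. Then it is bipartite, so by Proposition~\ref{dpbip} the parameter $n$ must be even; and by definition a Haar graph is vertex-transitive (the two parts are swapped by an automorphism, since $H(G,S)$ always admits such a swap when... actually more carefully, I should argue vertex-transitivity directly). Hmm — a Haar graph need not \emph{a priori} be vertex-transitive, so this needs care. The cleaner route: if $D(n,r)$ is a Haar graph then it is bipartite, forcing $n$ even; and separately I must still show it is vertex-transitive. But in fact that is exactly where I would instead invoke Theorem~\ref{zhoufvt}: I would show that if $D(n,r)$ is a Haar graph with $n$ even, then the congruence condition $r^2 \equiv \pm 1 \pmod{n/2}$ holds, hence $D(n,r)$ is vertex-transitive. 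So the forward direction reduces to: bipartite $\Rightarrow$ $n$ even (done), plus Haar $\Rightarrow$ the relevant number-theoretic condition on $r$.

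For the reverse direction, suppose $D(n,r)$ is vertex-transitive and $n$ is even. By Theorem~\ref{zhoufvt}, either $(n,r)$ is (up to the isomorphisms of Proposition~\ref{rm-r}) the special case giving the dodecahedron — but that has $n=5$ odd, so it is excluded by our hypothesis that $n$ is even — or $r^2 \equiv \pm 1 \pmod{n/2}$. In the latter case $D(n,r)$ is bipartite (Proposition~\ref{dpbip}). Now I split into two subcases. If $r^2 \equiv 1 \pmod{n/2}$, then by Theorem~\ref{zhoufvt} $D(n,r)$ is a Cayley graph, and being also bipartite, Proposition~\ref{cayhaar} immediately gives that it is a Haar graph. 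The remaining subcase $r^2 \equiv -1 \pmod{n/2}$ is the substantive one: here $D(n,r)$ is vertex-transitive but \emph{non-Cayley}, so Proposition~\ref{cayhaar} is useless and I must exhibit a concrete regular action realising the Haar structure. Concretely, I would produce an explicit group $G$ of order $2n$ inside $\aut(D(n,r))$ acting regularly on each of the two bipartition classes. The natural candidate is a group generated by the rotation $\alpha$ together with the flip $\beta$ and some automorphism built from $r^2 \equiv -1$; the condition $r^2 \equiv -1 \pmod{n/2}$ is precisely what makes a certain "twisted" map (something like $u_i \mapsto$ an index scaled by $r$, interchanging the $u$-cycle with the $z$-cycle or permuting the four vertex-classes cyclically) an automorphism of $D(n,r)$. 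I would verify it preserves the edge sets $\Omega$, $\Sigma$, $I$, check that together with $\alpha$ (and perhaps $\beta$) it generates a group of order $2n$ that is transitive on each part, and conclude via the converse characterisation of Haar graphs stated in the Introduction.

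The main obstacle is exactly this last subcase. Writing down the extra automorphism and confirming it generates a group of the right order that is semiregular with the two bipartition classes as orbits is the crux; one must use $r^2 \equiv -1 \pmod{n/2}$ in an essential way (it cannot follow from Cayleyness, since the graph is non-Cayley, so the regular-on-each-part group is \emph{not} extendable to a regular-on-all-vertices group). I expect the verification that the candidate map respects the inner edges $I = \{\{v_i, w_{i\pm r}\}\}$ to be where the congruence $r^2 \equiv -1$ enters, via a computation like $r \cdot (i \pm r) = ri \pm r^2 \equiv ri \mp 1 \pmod{n/2}$, matching the required incidence pattern after relabelling. Everything else — the forward direction and the $r^2\equiv 1$ subcase — is essentially bookkeeping on top of Propositions~\ref{dpbip} and~\ref{cayhaar} and Theorem~\ref{zhoufvt}.
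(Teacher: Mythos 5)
Your overall skeleton matches the paper's (forward direction via bipartiteness plus vertex-transitivity; converse split into the cases $r^2\equiv 1$ and $r^2\equiv -1$ mod $n/2$, with Proposition~\ref{cayhaar} disposing of the first case), but both of the places you flag as ``to be done'' are genuine gaps, not bookkeeping. For the forward direction you correctly observe that a Haar graph need not be vertex-transitive in general, but you then replace the missing argument by an unproved claim (``Haar $\Rightarrow$ $r^2\equiv\pm1$ mod $n/2$'') for which you offer no route; proving that congruence directly from the Haar property would be at least as hard as the original statement. The paper's argument is short and does not go through Theorem~\ref{zhoufvt} at all: if $D(n,r)=H(G,S)$, the copy of $G$ inside $\aut D(n,r)$ has the two bipartition classes as its orbits, while by Proposition~\ref{D(n,r)-2orbits} all the vertices $u_i$ lie in a single orbit of the full automorphism group; since (for $n$ even) the $u_i$ meet both bipartition classes, the two classes fuse into one orbit of $\aut D(n,r)$, which is therefore vertex-transitive. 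That orbit-fusion observation is the missing idea in your forward direction.

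In the converse, the subcase $r^2\equiv-1$ mod $n/2$ is exactly the substantive content of the theorem, and you leave it as a plan: no explicit automorphism is written down, and nothing is verified. Moreover your sketch underestimates the difficulty: the vertex indices live in $\Z_n$, not $\Z_{n/2}$, so a computation of the form $r(i\pm r)\equiv ri\mp1$ has to be carried out mod $n$, where the hypothesis only gives $r^2\equiv-1$ mod $m$ with $m=n/2$. The paper first normalises so that $r$ is odd (using Proposition~\ref{rm-r} to replace $r$ by $m-r$ when necessary), which forces $r^2\equiv-1$ mod $n$ when $m$ is odd but only $r^2\equiv m-1$ mod $n$ when $m$ is even; accordingly the extra automorphism $\delta$ must be defined with different formulas depending on the parities of $m$ and of the index $i$ (with additional shifts by $m$ when $m$ is even), and one then checks that $\langle\alpha^2,\delta\rangle\cong\Z_m\rtimes\Z_4$ has order $2n$ and acts regularly on each part. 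None of this parity analysis, nor the order/regularity verification, appears in your proposal, so as it stands the hard half of the theorem is asserted rather than proved.
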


\begin{proof}
First, we note that $D(n,r)$ is bipartite if and only if $n$ is even, by Proposition \ref{dpbip}, 
and hence we may suppose that $n$ is even, and then show that under that assumption, 
$D(n,r)$ is a Haar graph if and only if it is vertex-transitive. 

One direction is easy.  Suppose $\Gamma = D(n,r)$ is a Haar graph, say $H(G,S)$. 
Then by the definition of a Haar graph given in the Introduction, the subgroup $G_R$ 
of $\aut \Gamma$ induced by $G$ has two orbits on vertices, namely the two parts 
of the bipartition of~$\Gamma$.  On the other hand, by Proposition~\ref{D(n,r)-2orbits}, 
all the vertices $u_i$ lie in the same orbit of $\aut \Gamma$; and then since these vertices 
lie in both parts of $\Gamma$, it follows that $\aut \Gamma$ has a single orbit on vertices. 
Thus $\Gamma$ is vertex-transitive. 

For the converse, suppose that $\Gamma = D(n,r)$ is vertex-transitive, and let $m = \frac{n}{2}$. 
Then by Theorem \ref{zhoufvt}, we know that $r^2 \equiv \pm 1$ mod $m$. 
Also by Proposition \ref{rm-r} we may suppose that $0 < r < m$, 
and further, we may suppose that $r$ is odd, because if $r$ is even then 
$m$ is odd, and then by Proposition \ref{rm-r} we can replace $r$ by $m-r$. 
We now proceed by considering separately the two cases $r^2 \equiv \pm 1$ mod $m$.  

\smallskip
Case (a): Suppose that $r^2 \equiv 1$ mod $m$.  Then by Theorem \ref{zhoufvt}, 
we know that $D(n,r)$ is a Cayley graph, and also since it is bipartite, it follows from 
Proposition \ref{cayhaar} that it is a Haar graph as well. 

\smallskip
Case (b): Suppose that $r^2 \equiv -1$ mod $m$.  In this case we construct a group 
of automorphisms of $D(n,r)$ that acts regularly on each part of $D(n,r)$.  To do this, 
we take the automorphism $\alpha$ from the previous section, given by \\[-9pt] 
$$
\alpha : \ u_i\mapsto u_{i+1}, \ \ v_i\mapsto v_{i+1}, \ \ w_i\mapsto w_{i+1}, \ \ z_i\mapsto z_{i+1},
$$
and then take an additional automorphism $\delta$, given by \\[-6pt] 

\noindent 
\begin{tabular}{ll} 
$\delta : \ u_i\mapsto v_{ri+1}, \ \ v_i\mapsto u_{ri+1}, \ \ w_i\mapsto z_{ri+1}, \ \ z_i\mapsto w_{ri+1}$  
 &  if $m$ is odd and $i$ is even, \\[+2pt] 
$\delta : \ u_i\mapsto w_{ri+1}, \ \ v_i\mapsto z_{ri+1}, \ \ w_i\mapsto u_{ri+1}, \ \ z_i\mapsto v_{ri+1}$  
 &  if $m$ is odd and $i$ is odd, \\[+7pt] 
\end{tabular} 

\noindent or  \\[-6pt] 

\noindent 
\begin{tabular}{ll} 
$\delta : \ u_i\mapsto v_{ri+1}, \  v_i\mapsto u_{ri+1}, \  w_i\mapsto z_{ri+m+1}, \  z_i\mapsto w_{ri+m+1}$ 
 &  if $m$ is even and $i$ is even, \\[+2pt] 
$\delta : \ u_i\mapsto w_{ri+1}, \  v_i\mapsto z_{ri+1}, \  w_i\mapsto u_{ri+m+1}, \  z_i\mapsto v_{ri+m+1}$ 
 &  if $m$ is even and $i$ is odd.  \\[+8pt] 
\end{tabular} 

It is a straightforward exercise to verify that $\delta$ preserves the edge-set $\Omega \cup \Sigma \cup I$ 
of $D(n,r)$, and also preserves the two parts of $D(n,r)$, given in the proof of Proposition~\ref{dpbip}. 
To do the former, it is important to note that $r^2 \equiv 1$ mod $4$ (because $r$ is odd), 
and hence that $r^2 \equiv -1$ mod~$n$ when $m$ is odd, while $r^2 \equiv m\!-\!1$ mod $n$ when $m$ is even. 
For example,  if $m$ and $i$ are even then $\{v_i,w_{i+r}\}^\delta = \{u_{ri+1},u_{r(i+r)+m+1}\} = \{u_{ri+1},u_{ri}\}$. 

\smallskip
It is also easy to see that conjugation by $\delta$ takes $\alpha^2$ to $\alpha^{2r}$, 
and so the subgroup $G$ of $\aut(D(n,r))$ generated by $\alpha^2$ and $\delta$ 
is isomorphic to the semi-direct product $\Z_{m} \rtimes_r \Z_4$.  
In particular, $G$ has order $4m = 2n$.  
Also $G$ acts transitively and hence regularly on each of the two parts of $D(n,r)$, 
and therefore $D(n,r)$ is a Haar graph. 
\end{proof}

\section{Vertex-transitive Haar graphs that are not Cayley graphs}
\label{sec:Final}

Combining Theorems \ref{zhoufvt} and \ref{thm:Main}, we have the following, 
in answer to Problem \ref{prob2}: 

\begin{thm}
\label{thm:Final}
${}$\\[-22pt]
\begin{enumerate}
\item[{\rm (a)}] If $n$ is odd, or if $n$ is even and $r^2\not\equiv \pm 1 $ mod $\frac{n}{2}$, 
   then $D(n,r)$ is not a Haar graph, and is vertex-transitive only when $(n,r) = (5,\pm 2);$  \\[-22pt]
\item[{\rm (b)}] If $n$ is even and $r^2\equiv 1$ mod $\frac{n}{2}$, 
   then $D(n,r)$ is a Haar graph and a Cayley graph$\hskip 1pt ;$  \\[-22pt]
\item[{\rm (c)}] If $n$ is even and $r^2\equiv -1$ mod $\frac{n}{2}$, 
   then $D(n,r)$ is a Haar graph and is vertex-transitive but not a Cayley graph.
\end{enumerate}
\end{thm}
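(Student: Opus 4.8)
The plan is to obtain Theorem~\ref{thm:Final} as a direct corollary of Theorems~\ref{zhoufvt} and~\ref{thm:Main}, splitting into cases according to the parity of $n$ and the residue of $r^2$ modulo $\frac{n}{2}$. The unifying observation is that Theorem~\ref{thm:Main} characterises the Haar property for $D(n,r)$ as the conjunction ``$D(n,r)$ is vertex-transitive and $n$ is even'', while Theorem~\ref{zhoufvt} already determines vertex-transitivity exactly, together with the Cayley/non-Cayley dichotomy. I therefore do not expect a genuine obstacle here; the proof is essentially bookkeeping. The one point deserving a little care is the odd case, where the unique vertex-transitive member $D(5,\pm 2)\cong G(10,2)$ is not bipartite, so it is vertex-transitive yet --- by Proposition~\ref{dpbip} together with Theorem~\ref{thm:Main} --- still not a Haar graph.

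For part~(a) I would argue in two sub-cases. If $n$ is odd, then the parity condition in Theorem~\ref{thm:Main} fails, so $D(n,r)$ is not a Haar graph; and by Theorem~\ref{zhoufvt} the only vertex-transitive $D(n,r)$ with $n$ odd is $D(5,\pm 2)$. If instead $n$ is even but $r^2\not\equiv \pm 1$ mod $\frac{n}{2}$, then (as $n\ne 5$) neither clause of Theorem~\ref{zhoufvt} applies, so $D(n,r)$ is not vertex-transitive, and hence not a Haar graph by Theorem~\ref{thm:Main}; in this sub-case the assertion ``vertex-transitive only when $(n,r)=(5,\pm 2)$'' holds vacuously. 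Combining the two sub-cases gives~(a).

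For part~(b), if $n$ is even and $r^2\equiv 1$ mod $\frac{n}{2}$, then Theorem~\ref{zhoufvt} gives that $D(n,r)$ is vertex-transitive and a Cayley graph, and then Theorem~\ref{thm:Main}, using that $n$ is even, gives that it is also a Haar graph; alternatively one may skip Theorem~\ref{thm:Main} and apply Proposition~\ref{cayhaar} directly, since $D(n,r)$ is Cayley and (being $n$ even) bipartite. For part~(c), if $n$ is even and $r^2\equiv -1$ mod $\frac{n}{2}$, then Theorem~\ref{zhoufvt} gives that $D(n,r)$ is vertex-transitive but non-Cayley, and Theorem~\ref{thm:Main} again upgrades vertex-transitivity to the Haar property. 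Since every admissible pair $(n,r)$ with $n\ge 3$ and $0<r<n$ falls into (at least) one of the three cases, and the defining hypotheses of (a)--(c) are invariant under $r\mapsto n-r$ (so that no further appeal to Proposition~\ref{rm-r} is required), the theorem follows.
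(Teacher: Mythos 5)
Your proposal is correct and follows essentially the same route as the paper, which simply derives Theorem~\ref{thm:Final} by combining Theorem~\ref{zhoufvt} with Theorem~\ref{thm:Main}; your case-by-case bookkeeping (including the observation that $D(5,\pm 2)$ is vertex-transitive but not bipartite, hence not Haar) just makes explicit what the paper leaves as an immediate consequence.
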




\begin{cor}
If $m > 2$ and $r^2\equiv -1$ mod $m$, then $D(2m,r)$ 
is a Haar graph that is vertex-transitive but non-Cayley.  
In particular, there are infinitely many such graphs. 
\end{cor}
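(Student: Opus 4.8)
The plan is to obtain the corollary almost immediately from Theorem~\ref{thm:Final}. First I would put $n = 2m$, so that $n$ is even with $\frac{n}{2} = m > 2$, and observe that the hypothesis $r^2 \equiv -1$ mod $m$ is precisely the congruence $r^2 \equiv -1$ mod $\frac{n}{2}$ appearing in part~(c) of that theorem. Applying Theorem~\ref{thm:Final}(c) then yields at once that $D(2m,r)$ is a Haar graph which is vertex-transitive but not a Cayley graph. (Implicitly one should also check that $1_G \notin S$ is not an issue and that $r$ can be taken in the range $0 < r < n$; but $r^2 \equiv -1$ mod $m$ forces $r \not\equiv 0$ mod $m$, and Proposition~\ref{rm-r} lets us reduce $r$ modulo $n$ if needed, so nothing is lost.)

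Next I would argue that there are infinitely many admissible pairs $(m,r)$, and hence infinitely many such graphs. It is enough to exhibit infinitely many integers $m > 2$ for which $-1$ is a quadratic residue modulo $m$. Any prime $p \equiv 1$ mod $4$ works: the group $\Z_p^{\ast}$ of nonzero residues mod $p$ is cyclic of order $p-1$, which is divisible by $4$, so it contains an element $r$ of order $4$, and then $r^2$ has order $2$, forcing $r^2 \equiv -1$ mod $p$. By Dirichlet's theorem on primes in arithmetic progressions (or by an elementary Euclid-style argument applied to $N^2+1$) there are infinitely many primes $p \equiv 1$ mod $4$, and each satisfies $p \ge 5 > 2$. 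Since distinct values of $m$ give graphs $D(2m,r)$ with different numbers of vertices (namely $4m$), these graphs are pairwise non-isomorphic, so the family is genuinely infinite.

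I do not expect any real obstacle here: the substantive work has already been done in Theorem~\ref{thm:Main} (and hence Theorem~\ref{thm:Final}), and the only additional ingredient is the standard number-theoretic fact that there are infinitely many primes congruent to $1$ modulo $4$, together with the trivial observation that graphs on different numbers of vertices cannot be isomorphic.
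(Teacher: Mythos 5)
Your proof is correct and follows essentially the same route as the paper: the graph-theoretic content is exactly Theorem~\ref{thm:Final}(c), and the only remaining point is to exhibit infinitely many $m>2$ for which $-1$ is a square mod $m$. The paper handles that last point either by quoting the classical characterisation of such $m$ or, more simply, by taking $m=r^{2}+1$ for each integer $r\ge 2$ (which avoids any appeal to Dirichlet's theorem); your argument via primes $p\equiv 1 \bmod 4$, together with the observation that distinct $m$ give graphs of distinct orders $4m$, is equally valid.
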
 

\begin{proof}
The first part follows immediately from Theorem \ref{thm:Final}, 
and the second part follows from a well known fact in number theory, 
namely that $-1$ is a square mod $m$ if and only if $m$ or $m/2$ is 
a product of primes $p\equiv 1$ mod $4$ (see \cite[Chapter 6]{HW08}), 
or simply by taking $m = r^{2}+1$ for each integer $r \ge 2$. 
\end{proof} 

%
%
%

We discovered the first few of these examples during the week of the 
conference {\em Geometry and Symmetry}, held in 2015 at Veszpr{\' e}m, Hungary,  
to celebrate the 60th birthdays of K\'{a}roly Bezdek and Egon Schulte. 

The smallest of our examples is $D(10,2)$, of order $40$, 
occurring when $m = 5$ and $r \equiv \pm 2$ or $\pm 3$ mod $10$ 
(noting that $m-r = 3$ when $(m,r) = (5,2)$). 
This is also the smallest known Haar graph that is vertex-transitive and non-Cayley. 
It is a Kronecker cover over the dodecahedral graph $G(10,2)$, and is also a double 
cover over the Desargues graph $G(10,3)$.  
These graphs are illustrated in Figure 2. 

%

\medskip

\begin{figure}[!htb]
\label{fig:smallgraphs}
\begin{center}
\includegraphics[]{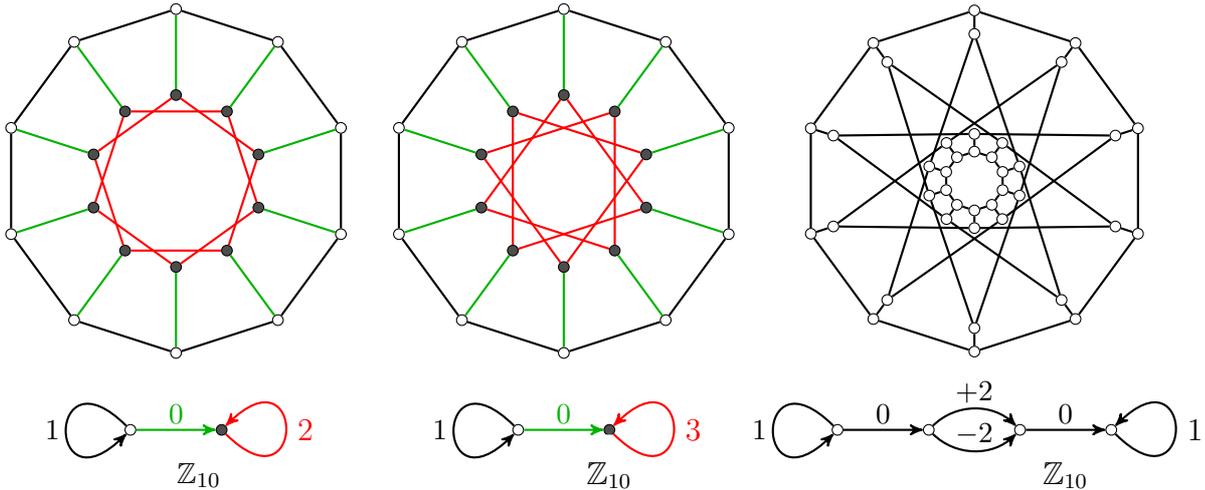}
\end{center}
${}$\\[-54pt] 
${}$
\centering
\caption{The dodecahedral graph $G(10,2)$, the Desargues graph $G(10,3)$, 
and the Haar graph $D(10,2) \cong D(10,3) \cong$ {\bf F40}}
\end{figure}

\smallskip 

The graph $D(10,2)$ was known by R.M. Foster as early as the late 1930s, 
and appears as the graph `{\bf 40}' (alternatively known as `{\bf F40}') in the {\em Foster Census\/} of connected symmetric 
trivalent graphs \cite{Fos88}.   
It was also studied in \cite{Wei84} by Asia Ivi\'c Weiss (the chair of the Veszpr{\' e}m conference), 
and by Betten, Brinkmann and Pisanski in~\cite{BetBP00}, and Boben, Gr\" unbaum, Pisanski and \v{Z}itnik 
in \cite{BobGPZ06}. 
It has girth $8$, and automorphism group of order $480$, and it is not just vertex-transitive, but is also arc-transitive.  
Moreover, by a very recent theorem of Kutnar and Petecki \cite{KutP1x}, the graph $D(n,r)$ 
is arc-transitive only when $(n,r) = (5,2)$ or $(10,2)$ or $(10,3)$.  
This implies that {\bf F40} is the only example from the family of graphs $D(n,r)$ that is arc-transitive 
but non-Cayley.  

In fact, {\bf F40} is the smallest vertex-transitive non-Cayley Haar graph, in terms of both the graph order 
and the number of edges. 
We found this by running a {\sc Magma} \cite{BosCP97} computation to construct all Haar graphs 
with at most $40$ vertices or at most $60$ edges, with a check for which of the 
graphs are vertex-transitive but non-Cayley.  
Incidentally, this computation shows that there are $60$ different examples of order $40$, 
with valencies running between $3$ and $17$, but just one of valency $3$, namely {\bf F40}. 

Finally, there are many other examples of vertex-transitive non-Cayley Haar graphs that 
are not of the form $D(n,r)$, including $3$-valent examples of orders $80$, $112$, $120$ and $128$, 
and higher-valent examples of orders $48$, $64$, $72$, $78$ and $80$. 
Among the $3$-valent examples, many are arc-transitive, 
including the graphs {\bf F80} and {\bf F640} in the Foster census \cite{Fos88} and its extended 
version in \cite{CoDo02,CoNe09}, and others in the first author's complete set of all 
arc-transitive trivalent graphs of order up to 10000 described on his website \cite{Co-list}. 
Most of these `small' examples are abelian regular covers of {\bf F40}, 
of orders 1280, 2560, 3240, 5000, 5120, 6480, 6720, 9720 and 10000, 
and are $3$-arc-regular, but two others are $2$-arc-regular of type $2^2,$ with orders 6174 and 8064, 
and these are abelian regular covers of the Pappus graph ({\bf F18}) and the Coxeter graph ({\bf F28}) respectively.




\section*{Acknowledgements} 

The authors acknowledge with gratitude the use of 
the {\sc Magma} system \cite{BosCP97} for helping to find and analyse examples, 
and would also like to thank Klavdija Kutnar and Istv\'{a}n Kov\'{a}cs for fruitful conversations and for pointing out several crucial references. 


\end{document}